\documentclass[twocolumn,amsthm]{autart}    

\usepackage{graphicx}          
\usepackage{url}
\usepackage{amsmath,mathtools,amssymb}
\usepackage{bm}
\usepackage[inline]{enumitem}
\usepackage{color}
\usepackage[noadjust]{cite}

\DeclareRobustCommand{\svdots}{
  \vbox{%
    \baselineskip=0.33333\normalbaselineskip
    \lineskiplimit=0pt
    \hbox{.}\hbox{.}\hbox{.}%
    \kern-0.2\baselineskip
  }%
}

\newcounter{assumption}
\newcounter{theorem}
\newcounter{definition}
\newcounter{lemma}
\newcounter{corollary}
\newcounter{remark}

\begin{document}

\begin{frontmatter}

\title{On Poly-Quadratic Stabilizability and Detectability of~Polytopic~LPV~Systems\thanksref{footnoteinfo}} 

\thanks[footnoteinfo]{This research received funding from the European Research Council (ERC) under the Advanced ERC grant agreement PROACTHIS, no. 101055384.\\ Corresponding author: T.~J.~{Meijer}}

\author[TUe]{T.~J.~Meijer}\ead{t.j.meijer@tue.nl},      
\author[TUe]{V.~S.~Dolk}\ead{v.s.dolk@tue.nl},    
\author[TUe]{W.~P.~M.~H.~Heemels}\ead{m.heemels@tue.nl} 

\address[TUe]{Department of Mechanical Engineering, Eindhoven University of Technology, Eindhoven, The Netherlands}  


\begin{abstract}                          
	In this technical communique, we generalize the well-known Lyapunov-based stabilizability ($S-ASA^\top+BB^\top\succ 0$) and detectability ($P-A^\top PA+C^\top C\succ 0$) tests for discrete-time linear time-invariant systems to polytopic linear parameter-varying systems using the class of so-called poly-quadratic Lyapunov functions. 
\end{abstract}

\begin{keyword}
	Linear parameter-varying systems, switched linear systems, robust control, polytopic systems.
\end{keyword}

\end{frontmatter}

\section{Introduction}
Controllability, observability, stabilizability and detectability properties are cornerstones of modern control theory. Especially in the context of discrete-time (DT) linear time-invariant (LTI) systems of the form
\begin{equation}
        x_{k+1} = Ax_k + Bu_k, \quad y_k = Cx_k,
    \label{eq:lti-system}
\end{equation} 
many classical characterizations
are available for verifying these 
properties~\cite{Hespanha2018}. In particular, the system~\eqref{eq:lti-system} is detectable (in the sense that there exists an observer that asymptotically recovers the state) if and only if there exists a symmetric positive-definite matrix $P$ such that~\cite{Hespanha2018}
\begin{equation}
    P - A^\top PA + C^\top C\succ 0.
    \label{eq:lti-det-lmi}
\end{equation}
Interestingly,~\eqref{eq:lti-det-lmi} does not contain a term that is directly related to an observer gain for which the resulting error system is asymptotically stable (but we can construct such a gain based on a solution $P$ to~\eqref{eq:lti-det-lmi}). Similarly, the system~\eqref{eq:lti-system} is stabilizable (in the sense that there exists a state-feedback controller that asymptotically stabilizes the system) if and only if there exists a symmetric positive-definite matrix $S$ satisfying~\cite{Hespanha2018}
\begin{equation}
    S - ASA^\top +BB^\top \succ 0,
    \label{eq:lti-stab-lmi}
\end{equation}
which, as for detectability, does not contain a term that directly relates to a stabilizing controller gain (but such a gain can be constructed based on a solution $S$ to~\eqref{eq:lti-stab-lmi}).

In this technical communique, we are interested in extensions of these celebrated Lyapunov-based tests for detectability~\eqref{eq:lti-det-lmi} and stabilizability~\eqref{eq:lti-stab-lmi} to DT polytopic linear parameter-varying (LPV) systems, of the form
\begin{equation}
		x_{k+1} = A(p_k)x_k + Bu_k,\quad 
        y_k = Cx_k,	
	\label{eq:system}
\end{equation}
where $x_k\in\mathbb{R}^{n_x}$, $u_k\in\mathbb{R}^{n_u}$ and $y_k\in\mathbb{R}^{n_y}$ denote, respectively, the state, control input and measured output at time $k\in\mathbb{N}$. Here, the unknown parameter $p_k$ belongs to a known set $\mathbb{P}$, i.e., $p_k\in\mathbb{P}\subseteq\mathbb{R}^{n_p}$ for all $k\in\mathbb{N}$. The set of admissible parameter sequences, denoted $\mathcal{P}$, is the set of all parameter sequences $\bm{p}=\{p_k\}_{k\in\mathbb{N}}$ with $p_k\in\mathbb{P}$ for all $k\in\mathbb{N}$.  The matrix-valued function $A\colon\mathbb{P}\rightarrow\mathbb{R}^{n_x\times n_x}$ and matrices $B\in\mathbb{R}^{n_x\times n_u}$ and $C\in\mathbb{R}^{n_y\times n_x}$ are known. We require $B$ and $C$ to be constant, which is a common assumption in this context, see, e.g.,~\cite{Daafouz2001,Heemels2010}.
We focus on polytopic systems~\eqref{eq:system}, whose specific mathematical structure facilitates the development of LMI-based tools, see, e.g.,~\cite{Heemels2010,Daafouz2001,Oliveira2022,Pereira2021,Pandey2017b,Pandey2017,Halimi2013}.
\setcounter{thm}{\theassumption}\stepcounter{assumption}
\begin{assum}\label{asm:polytopic-set}
    The parameter set $\mathbb{P}$ is contained within a polytope, i.e., there exist vertices $v_i\in\mathbb{R}^{n_p}$, and continuous
    functions $\xi_i~\colon~\mathbb{P}\rightarrow\mathbb{R}_{\geqslant 0}$, $i\in\mathcal{N}\coloneqq\mathbb{N}_{[1,N]}$, such that the mapping $\xi\coloneqq (\xi_1,\xi_2,\hdots,\xi_N)^\top$ satisfies $\xi(\mathbb{P})\subseteq\mathbb{X}\coloneqq\{\mu=(\mu_1,\mu_2,\hdots,\mu_N)^\top\in\mathbb{R}^{N}_{\geqslant 0}~|~\sum_{i\in\mathcal{N}}\mu_i=1\}$, and
    \begin{equation*}
        \sum_{i\in\mathcal{N}} \xi_i(\pi)v_i = \pi,\text{ for any }\pi\in\mathbb{P}.
    \end{equation*}
\end{assum}
\setcounter{thm}{\thedefinition}\stepcounter{definition}
\begin{defn}\label{dfn:polytopic}
	The system~\eqref{eq:system} is said to be polytopic, if there exist matrices $A_i\in\mathbb{R}^{n_x\times n_x}$, $i\in\mathcal{N}$, such that
\begin{equation}
    A(\pi) = \sum_{\mathclap{i\in\mathcal{N}}}\xi_i(\pi)A_i,\text{ for any }\pi\in\mathbb{P}.
	\label{eq:poly-A}
\end{equation}
    If, in addition, the functions $\xi_i$, $i\in\mathcal{N}$, satisfy $\{\bm{e}_i\}_{i\in\mathcal{N}}\subset\xi(\mathbb{P})$, where $\bm{e}_i=(0_{i-1}^\top,1,0_{n-i}^\top)^\top\in\mathbb{R}^n$, $i\in\mathbb{N}_{[1,n]}$, is the $i$-th $n$-dimensional elementary basis vector, then the system~\eqref{eq:system} is said to be strictly polytopic.
\end{defn}

Many synthesis LMIs have been developed by exploiting the polytopic structure of both the system itself and a specific class of Lyapunov functions, called poly-quadratic Lyapunov functions (poly-QLF), see, e.g.,~\cite{Heemels2010,Daafouz2001,Oliveira2022,Pereira2021,Pandey2017b,Pandey2017,Halimi2013}, which we formally describe in Definition~\ref{dfn:polyQS} below. These synthesis LMIs implicitly assume properties such as stabilizability~\cite{Daafouz2001,Pereira2021,Pandey2017b} and detectability~\cite{Heemels2010,Oliveira2022,Pandey2017,Halimi2013}. These results would benefit from novel tests to study stabilizability and detectability properties \emph{in an isolated manner} (in the sense of not including the observer/controller gain parameters directly in the conditions, such as in~\eqref{eq:lti-det-lmi} and~\eqref{eq:lti-stab-lmi} in the LTI setting) also for LPV systems. To the best of our knowledge, no generalizations of the Lyapunov-based tests in~\eqref{eq:lti-det-lmi} and~\eqref{eq:lti-stab-lmi} to LPV systems are available in the literature. In this technical communique, we fill this gap by extending the celebrated conditions~\eqref{eq:lti-det-lmi} and~\eqref{eq:lti-stab-lmi} to obtain novel conditions, based on poly-QLFs, for polytopic LPV systems that can be used to characterize stabilizability and detectability in an isolated manner. 

{\bf Notation.} The sets of real, non-negative real and natural numbers are denoted, respectively, $\mathbb{R}=(-\infty,\infty)$, $\mathbb{R}_{\geqslant 0}=[0,\infty)$ and $\mathbb{N}=\{0,1,2,\hdots\}$. We denote $\mathbb{N}_{[n,m]}=\{n,n+1,n+2,\hdots,m\}$ for $n,m\in\mathbb{N}$. The set of $n$-by-$n$ symmetric matrices is denoted by $\mathbb{S}^n$. $M\succ 0$ and $M\succcurlyeq 0$ mean, respectively, that $M\in\mathbb{S}^n$ is positive definite and positive semi-definite. The sets of such matrices are $\mathbb{S}^n_{\succ 0}$ and $\mathbb{S}^{n}_{\succcurlyeq 0}$, respectively. The symbol $I$ is an identity matrix of appropriate dimensions and $\star$ completes a symmetric matrix, e.g., $\begin{bmatrix}\begin{smallmatrix} A & \star\\B & C\end{smallmatrix}\end{bmatrix}=\begin{bmatrix}\begin{smallmatrix}A & B^\top\\B & C\end{smallmatrix}\end{bmatrix}$. For $A\in\mathbb{R}^{n\times m}$, $A^\top$ is its transpose, $\operatorname{He}(A)=A+A^\top$ and $A_{\perp}$ is a matrix whose columns are a basis for $\operatorname{ker}A$. Moreover, $\operatorname{diag}\{A_i\}_{i\in\mathbb{N}_{[1,n]}}$ is a block-diagonal matrix with blocks $A_i$, $i\in\mathbb{N}_{[1,n]}$. The notation $x\leftarrow y$ means ``replace'' $x$ by $y$.

\section{Parameter-sequence-dependent systems}
In this section, we introduce the broader class of so-called \emph{parameter-sequence-dependent} (PSD) systems which generalizes~\eqref{eq:system}, given by
\begin{equation}\label{eq:PSD}
		x_{k+1} = \mathcal{A}(k,\bm{p})x_k + Bu_k,\quad y_k = Cx_k,	
\end{equation}
where the matrix-valued function $\mathcal{A}~\colon~\mathbb{N}\times\mathcal{P}\rightarrow\mathbb{R}^{n_x\times n_x}$ depends not only on $p_k$ but on the entire parameter sequence $\bm{p}$ and on the current time $k$. Observe that~\eqref{eq:PSD} includes (polytopic) LPV systems~\eqref{eq:system} as a special case. At first glance, the dependency on $\bm{p}$ may seem strange but it appears naturally in the analysis presented later on.

\subsection{Stability notion}
Next, we introduce an important stability notion based on poly-QLFs for the class of PSD systems~\eqref{eq:PSD} introduced before. Let $x_k(\bm{p},\bm{u},x_0)$ be the solution to~\eqref{eq:PSD} at time $k\in\mathbb{N}$ for initial state $x_0\in\mathbb{R}^{n_x}$, parameter sequence $\bm{p}\in\mathcal{P}$ and input sequence $\bm{u}=\{u_j\}_{j\in\mathbb{N}}$ with $u_j\in\mathbb{R}^{n_u}$ for all $j\in\mathbb{N}$.  
We consider the notion of global uniform asymptotic stability (GUAS)~\cite{Khalil1996-2nd-edition}, in which uniformity with respect to $\bm{p}$ (and $x_0$) is considered, see~\cite[Definition 3]{Meijer2024-cones-automatica}. This is without loss of generality since, for the class of LPV systems considered here, uniformity with respect to initial time $k_0$, which is used in the general definition of GUAS, is equivalent to uniformity in $\bm{p}\in\mathcal{P}$ since any time-shifted version of $\bm{p}$ is also in $\mathcal{P}$. We also note that, for the class of LPV systems considered here, uniform asymptotic stability (UAS) and GUAS are equivalent, see, e.g.,~\cite{Amato2006}.
For polytopic systems~\eqref{eq:system}, poly-QLFs have been used extensively, see, e.g.,~\cite{Daafouz2001,Heemels2010,Oliveira2022,Pereira2021}, for certifying GUAS because they \begin{enumerate*}[label=(\alph*)] \item can be constructed using LMI-based techniques, and \item are less conservative than traditional common quadratic Lyapunov functions \end{enumerate*}. These poly-QLFs lead us to define poly-quadratic stability (poly-QS) for PSD systems~\eqref{eq:PSD} below.
\setcounter{thm}{\thedefinition}\stepcounter{definition}
\begin{defn}\label{dfn:polyQS}
	The PSD system~\eqref{eq:PSD} is said to be poly-QS, if it admits a poly-QLF, i.e., a function $V\colon\mathbb{P}\times\mathbb{R}^{n_x}\rightarrow\mathbb{R}_{\geqslant 0}$ of the form
	\begin{equation}
		V(\pi,x) = x^\top P(\pi)x\text{ with }P(\pi) = \sum_{i\in\mathcal{N}}\xi_i(\pi)\bar{P}_i,
		\label{eq:poly-lyap}
	\end{equation}
	where $\bar{P}_i\in\mathbb{S}^{n_x}$ and $\xi_i$, $i\in\mathcal{N}$, are the functions in~\eqref{eq:poly-A}, satisfying, for some $a_1,a_2,a_3\in\mathbb{R}_{>0}$, and for all $\bm{p}\in\mathcal{P}$, $k\in\mathbb{N}$, and $x_{k+1},x_k\in\mathbb{R}^{n_x}$ with $x_{k+1}=\mathcal{A}(k,\bm{p})x_k$,
	\begin{align}
		a_1\|x_k\|^2&\leqslant V(p_k,x_{k})\leqslant a_2\|x_k\|^2,\label{eq:lyap-bounds}\\
		V(p_{k+1},x_{k+1})&\leqslant V(p_k,x_k)-a_3\|x_k\|^2.\label{eq:lyap-desc}%
	\end{align}
\end{defn}
\vspace*{-\belowdisplayskip}
\noindent Poly-QS implies GUAS~\cite{Khalil1996-2nd-edition}, however, the converse is in general not true, as shown in~\cite[Section 4.1]{Meijer2024-cones-automatica} by means of a counterexample, i.e., a system of the form~\eqref{eq:system} (and, thereby, of the form~\eqref{eq:PSD}) that is GUAS but not poly-QS. Thus, some degree of conservatism is introduced by considering poly-QS. The use of poly-QLFs in the context of polytopic systems is strongly related to the use of mode-dependent Lyapunov functions for switched linear systems (SLSs)~\cite{Mason2012} and, thus, the results in the sequel are also relevant for SLSs~\cite{Athanasopoulos2014,Philippe2016,Liberzon2003}. Several necessary and sufficient LMI-based conditions, e.g.,~\cite{Daafouz2001,Heemels2010,Pandey2018}, for poly-QS of polytopic systems~\eqref{eq:system} are found in the literature.

\subsection{Detectability notion}\label{sec:detectability}
We are interested in the existence of an observer for~\eqref{eq:system}, which results in the corresponding estimation error system, see~\eqref{eq:error-system} below, being poly-QS and, thereby, GUAS. The observer structure that we consider is given by
\begin{equation}
	\hat{x}_{k+1} = A(p_k)\hat{x}_k + L(k,\bm{p})(C\hat{x}_k-y_k) + Bu_k,
	\label{eq:observer}
\end{equation}
where $\hat{x}_k\in\mathbb{R}^{n_x}$ denotes the state estimate at time $k\in\mathbb{N}$ and $L\colon\mathbb{N}\times\mathcal{P}\rightarrow\mathbb{R}^{n_x\times n_y}$ is the to-be-designed observer gain. The error $e_k\coloneqq \hat{x}_k-x_k$, $k\in\mathbb{N}$, is governed by
\begin{equation}
	e_{k+1} = (A(p_k)+ L(k,\bm{p})C)e_k,
	\label{eq:error-system}
\end{equation}
which remains linear in $e$. This class of observers clearly includes the observers~\eqref{eq:observer} with $L(k,\bm{p}) = \bar{L}(p_k)$, $\bm{p}\in\mathcal{P}$, $k\in\mathbb{N}$, for some $\bar{L}\colon\mathbb{P}\rightarrow\mathbb{R}^{n_x\times n_y}$, considered in, e.g.,~\cite{Heemels2010,Pandey2018,Oliveira2022}, as special cases but it also allows $L$ that depend on past or future values of $p_k$. In addition, unlike, e.g.,~\cite{Heemels2010,Pandey2018}, we do not require that $L$ is polytopic or time-independent. Hence,~\eqref{eq:observer} is quite general and allows us to exploit knowledge of the past or future of $\bm{p}$, if available, to design observers that render~\eqref{eq:error-system} poly-QS. 

\setcounter{thm}{\thedefinition}\stepcounter{definition}
\begin{defn}\label{dfn:poly-quadratic-detectability}
	The polytopic system~\eqref{eq:system} is said to be poly-Q detectable, if the error system~\eqref{eq:error-system} is poly-QS for some function $L\colon\mathbb{N}\times\mathcal{P}\rightarrow\mathbb{R}^{n_x\times n_y}$.
\end{defn}
We deliberately do not restrict the form of the observer gain in~\eqref{eq:observer} a priori, but we aim to let the poly-Q detectability requirement (i.e.,~\eqref{eq:lyap-bounds} and~\eqref{eq:lyap-desc} for~\eqref{eq:error-system}) reveal possible simpler structures for~\eqref{eq:observer}. In fact, we will show that if a general $L$ exists for which~\eqref{eq:error-system} is poly-QS, then also a simpler $L$ depending only on $p_k$ can be used. Starting from this general setting also strengthens a possible necessary detectability condition.

\subsection{Stabilizability notion}
Next, we are interested in the existence of stabilizing state-feedback laws of the form 
\begin{equation}
	u_k = K(k,\bm{p})x_k\text{ for some }K\colon\mathbb{N}\times\mathcal{P}\rightarrow\mathbb{R}^{n_u\times n_x},
	\label{eq:fb-law}
\end{equation}
for~\eqref{eq:system}. These are more general feedback laws than those considered in, e.g.,~\cite{Daafouz2001,Pereira2021}, where $K(k,\bm{p})=\bar{K}(p_k)$, $\bm{p}\in\mathcal{P}$, $k\in\mathbb{N}$, for some polytopic function $\bar{K}\colon\mathbb{P}\rightarrow\mathbb{R}^{n_u\times n_x}$, in the sense that $K$ is not necessarily polytopic and may depend on the entire parameter sequence $\bm{p}$. Similar comments apply as for the detectability case above.

\setcounter{thm}{\thedefinition}\stepcounter{definition}
\begin{defn}\label{dfn:poly-q-stabilizability}
	The polytopic system~\eqref{eq:system} is said to be poly-Q stabilizable, if the closed-loop system
     \begin{equation}
    	x_{k+1} = (A(p_k) + BK(k,\bm{p}))x_k
    	\label{eq:closed-loop}
    \end{equation}
    is poly-QS for some function $K\colon\mathbb{N}\times\mathcal{P}\rightarrow\mathbb{R}^{n_u\times n_x}$.
\end{defn}

\section{Main results}

\subsection{Poly-Q-detectability test}
Next, we propose novel LMI-based conditions that generalize the Lyapunov-based LTI detectability test~\eqref{eq:lti-det-lmi}.
\setcounter{thm}{\thetheorem}\stepcounter{theorem}
\begin{thm}\label{thm:poly-q-detectability-analysis}
	The strictly polytopic system~\eqref{eq:system} is poly-Q detectable, if and only if there exist matrices $\bar{P}_i\in\mathbb{S}^{n_x}_{\succ 0}$, $i\in\mathcal{N}$, such that
	\begin{equation}
		\bar{P}_i-A_i^\top \bar{P}_jA_i+C^\top C\succ 0,\text{ for all }i,j\in\mathcal{N}.
		\label{eq:poly-q-detectability-analysis}
	\end{equation}
	Moreover, in that case,~\eqref{eq:observer} with, for $k\in\mathbb{N}$ and $\bm{p}\in\mathcal{P}$,
	\begin{equation}
		L(k,\bm{p}) = -\sum_{i\in\mathcal{N}}\xi_i(p_k) A_i(\bar{P}_i+C^\top C)^{-1}C^\top
		\label{eq:poly-q-detectability-analysis-gain}
	\end{equation}	 
	renders the error system~\eqref{eq:error-system} poly-QS with poly-QLF~\eqref{eq:poly-lyap}. If the system~\eqref{eq:system} is polytopic but not strictly polytopic, the conditions~\eqref{eq:poly-q-detectability-analysis} are sufficient. 
\end{thm}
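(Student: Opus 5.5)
We prove sufficiency and necessity separately. The key tool for both directions is a completion-of-squares identity. For a fixed $\bar P\succ 0$ and matrices $A$, $L$ of compatible dimensions, minimizing $(A+LC)^\top \bar P_j (A+LC)$ over $L$ does not directly give the form in~\eqref{eq:poly-q-detectability-analysis}. So we instead work with the inverse (dual) form. The Lyapunov decrease for~\eqref{eq:error-system} reads, at vertices $i$ (current) and $j$ (next),
\[
\bar P_i - (A_i+L_iC)^\top \bar P_j (A_i+L_iC)\succ 0 .
\]
By a Schur complement this is equivalent to
\[
\begin{bmatrix}\bar P_i & \star\\ \bar P_j(A_i+L_iC) & \bar P_j\end{bmatrix}\succ 0 .
\]
Setting $Y=\bar P_j L_i$ makes the condition affine in $Y$. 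The projection (elimination) lemma then removes $Y$. The resulting conditions involve the kernel of $[C\ \ 0]$ and the kernel of $[0\ \ I]$. The second kernel yields $\bar P_i\succ0$. The first yields $C_\perp^\top(\bar P_i - A_i^\top \bar P_j A_i)C_\perp\succ 0$. By Finsler's lemma, this is equivalent to the existence of $\mu$ with $\bar P_i - A_i^\top\bar P_jA_i+\mu C^\top C\succ 0$. Rescaling all $\bar P$ by $1/\mu$ gives~\eqref{eq:poly-q-detectability-analysis}.

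A single $\mu$ must work for all pairs $(i,j)$. Since there are finitely many pairs, we take the maximum of the individual $\mu$'s; increasing $\mu$ preserves the inequality. Note, however, that the projection argument produces an $L$ depending on both $i$ and $j$. This is an obstacle, because $L(k,\bm p)$ may depend on $p_{k+1}$ but must handle mixtures. I therefore plan to prove sufficiency directly with the explicit gain.

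\textbf{Sufficiency.} Fix $k$, and write $\lambda=\xi(p_k)$, $\lambda^+=\xi(p_{k+1})$, $P^+=\sum_j\lambda^+_j\bar P_j$ and $Q_i=\bar P_i+C^\top C$. The central step is the vertex inequality: for each $i$, set $L_i=-A_iQ_i^{-1}C^\top$. We must show that
\[
(A_i+L_iC)^\top\bar P_j(A_i+L_iC)\prec\bar P_i
\]
for all $j$. Using $\bar P_j\prec Q_i$-type bounds is not immediate, so I would use the inverse form. First, $A_i+L_iC=A_iQ_i^{-1}\bar P_i$. Hence the vertex inequality becomes
\[
\bar P_iQ_i^{-1}A_i^\top\bar P_jA_iQ_i^{-1}\bar P_i\prec\bar P_i .
\]
From~\eqref{eq:poly-q-detectability-analysis}, $A_i^\top\bar P_jA_i\prec Q_i$, so the left side is $\prec \bar P_iQ_i^{-1}\bar P_i$ (using invertibility of $\bar P_iQ_i^{-1}$). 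Finally, $\bar P_iQ_i^{-1}\bar P_i\preccurlyeq\bar P_i$ because $Q_i\succcurlyeq\bar P_i$. This gives a strict decrease at vertices with a uniform margin.

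Next we pass to the convex combination. The gain is $L=\sum_i\lambda_iL_i$, so $A(p_k)+LC=\sum_i\lambda_i(A_i+L_iC)=\sum_i\lambda_iM_i$. The map $M\mapsto M^\top P^+M$ is matrix-convex, and $P^+$ is affine in $\lambda^+$. Therefore
\[
\Bigl(\sum_i\lambda_iM_i\Bigr)^\top P^+\Bigl(\sum_i\lambda_iM_i\Bigr)\preccurlyeq\sum_{i,j}\lambda_i\lambda^+_jM_i^\top\bar P_jM_i\preccurlyeq\sum_i\lambda_i\bar P_i-a_3I .
\]
The bounds~\eqref{eq:lyap-bounds} follow from the minimum and maximum eigenvalues of the $\bar P_i$. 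This direction holds for any polytopic system, which settles the final sentence of the theorem.

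\textbf{Necessity (strict polytopic case).} Suppose $L(k,\bm p)$ and $\bar P_i$ satisfy Definition~\ref{dfn:polyQS}. Because $\bm e_i\in\xi(\mathbb P)$, we can choose $p_k$ and $p_{k+1}$ with $\xi(p_k)=\bm e_i$ and $\xi(p_{k+1})=\bm e_j$. Some admissible sequence realizes this, since the sequences are arbitrary in $\mathbb P$. At that time, $\bar P_i\succcurlyeq a_1I$ and
\[
(A_i+LC)^\top\bar P_j(A_i+LC)\preccurlyeq\bar P_i-a_3I
\]
for some matrix $L$. Restricting this to $\ker C$ gives $C_\perp^\top(\bar P_i-A_i^\top\bar P_jA_i)C_\perp\succ0$. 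Finsler's lemma and the uniform scaling argument above then yield~\eqref{eq:poly-q-detectability-analysis}.

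I expect the main obstacle to be the sufficiency argument. Specifically, the difficulty is checking that the particular gain~\eqref{eq:poly-q-detectability-analysis-gain}, which depends only on $p_k$, works simultaneously for all successor vertices $j$. The identity $A_i+L_iC=A_iQ_i^{-1}\bar P_i$ is what makes this tractable.
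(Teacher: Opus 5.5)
Your proof is correct. It follows the paper's overall structure: the same vertex gains $L_i=-A_iQ_i^{-1}C^\top$ (with your $Q_i=\bar P_i+C^\top C$), the same identity $A_i+L_iC=A_iQ_i^{-1}\bar P_i$, the same bound $\bar P_i\preccurlyeq Q_i$, and for necessity the same elimination of $L$, Finsler step and rescaling. The execution differs in two places.

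In sufficiency, the paper works with $\bar S_i=\bar P_i^{-1}$ and shows $\bar S_j-(A_i+L_iC)\bar S_i(A_i+L_iC)^\top\succ 0$. It rewrites this as a block LMI whose off-diagonal block $(A_i+L_iC)^\top\bar P_j$ is bilinear in the $i$- and $j$-data. It then sums with weights $\xi_i(\pi)\xi_j(\pi_+)$ and finishes with a Schur complement. You stay in the primal variables, using congruence by $Q_i^{-1}\bar P_i$, and handle the mixing with the matrix Jensen inequality for $M\mapsto M^\top P^+M$ with $P^+\succcurlyeq 0$. Both are standard ways of passing from vertex inequalities to convex combinations. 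Yours is shorter and makes the uniform margin $a_3$ explicit, which the paper leaves implicit.

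In necessity, the paper eliminates $L(k,\bm p)$ with the projection lemma over all $(\pi,\pi_+)\in\mathbb{P}\times\mathbb{P}$. It then uses the parametric Finsler Lemma~\ref{lem:finsler}, which requires $\mathbb{P}$ to be compact (not stated in Assumption~\ref{asm:polytopic-set}), to get a single $\mu$ on all of $\mathbb{P}\times\mathbb{P}$, and only afterwards evaluates at the vertices. You restrict to vertex pairs first and use only the trivial half of elimination: on $\ker C$ the term $LC$ vanishes. You then apply the ordinary finite-dimensional Finsler lemma to finitely many pairs and take the largest $\mu_{ij}$. State explicitly that you also take $\mu>0$, so that rescaling by $1/\mu$ keeps $\bar P_i\succ 0$; this is harmless since increasing $\mu$ preserves the inequality.

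Your ordering is more elementary and sidesteps the compactness issue. The paper's ordering yields a stronger intermediate inequality, $P(\pi)-A^\top(\pi)P(\pi_+)A(\pi)+\mu C^\top C\succ 0$ for all $\pi,\pi_+\in\mathbb{P}$, valid for any polytopic system. That is exactly the template it reuses for the necessity part of Theorem~\ref{thm:poly-quadratic-stabilizability-nec-suff}.
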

\vspace*{-\belowdisplayskip}
\begin{proof}
    \underline{Sufficiency:} Suppose that there exist $\bar{P}_i\in\mathbb{S}_{\succ 0}$ and $\bar{S}_i\coloneqq\bar{P}_i^{-1}$, $i\in\mathcal{N}$, for which~\eqref{eq:poly-q-detectability-analysis} holds. By applying Schur complement twice, we obtain
 \begin{equation}\label{eq:twice-schur1}
    \bar{S}_j - A_i(\bar{P}_i+C^\top C)^{-1}A_i^\top \succ 0,\text{ for all }i,j\in\mathcal{N}.
 \end{equation}
 Let $\widehat{\mathcal{P}}_i\coloneqq (\bar{P}_i+C^\top C)^{-1}$ and $L_i \coloneqq -A_i\widehat{\mathcal{P}}_iC^\top$, $i\in\mathcal{N}$, such that  $A_i+L_iC = A_i(I-\widehat{\mathcal{P}}_iC^\top C)=A_i\widehat{\mathcal{P}}_i\bar{P}_i$, for all $i\in\mathcal{N}$. We denote 
 \begin{equation*}
    Q_{ij}\coloneqq \bar{S}_j-(A_i+L_iC)\bar{S}_i(A_i+L_iC)^\top, \quad i,j\in\mathcal{N},
\end{equation*}
and will now show that $Q_{ij}\succ 0$, $i,j\in\mathcal{N}$. It holds that $-\bar{P}_i\succcurlyeq -(\bar{P}_i+C^\top C)$, $i\in\mathcal{N}$, which, since 
\begin{equation*}
    Q_{ij} = \bar{S}_j-A_i(\bar{P}_i+C^\top C)^{-1}\bar{P}_i(\bar{P}_i+C^\top C)^{-1}A_i^\top,
\end{equation*}
for all $i,j\in\mathcal{N}$, implies that
            $Q_{ij}\succcurlyeq \bar{S}_j-A_i(\bar{P}_i+C^\top C)^{-1}A_i^\top\succ 0$ for all $i,j\in\mathcal{N}$, due to~\eqref{eq:twice-schur1}. Using Schur complement followed by a congruence transformation with $\operatorname{diag}\{\bar{P}_j,I\}$, $Q_{ij}\succ 0$ is equivalent to 
    \begin{equation*}
        \begin{bmatrix}\begin{smallmatrix}
            \bar{P}_j & \star\\
            (A_i+L_iC)^\top\bar{P}_j & \bar{P}_i
        \end{smallmatrix}\end{bmatrix}\succ 0,\text{ for all }i,j\in\mathcal{N}.
    \end{equation*}
    Multiplying by $\xi_i(\pi)\xi_j(\pi_+)$ and summing over all $i,j\in\mathcal{N}$ followed by applying Schur complement yields 
    \begin{equation*}
        P(\pi)-(A(\pi)+L(\pi)C)^\top P(\pi_+)(A(\pi)+L(\pi)C)\succ 0
    \end{equation*}
    for all $\pi_+,\pi\in\mathbb{P}$ with $P$ as in~\eqref{eq:poly-lyap}. Thus,~\eqref{eq:error-system} with $L$ in~\eqref{eq:poly-q-detectability-analysis-gain} is poly-QS and $V$~\eqref{eq:poly-lyap} is a poly-QLF. 
	
	\underline{Necessity for strictly polytopic systems:} Suppose the system~\eqref{eq:system} is poly-Q detectable.
    From~\eqref{eq:lyap-desc}, it follows that \[P(p_k)\succ (A(p_k)+L(k,\bm{p})C)^\top P(p_{k+1})(A(p_k)+L(k,\bm{p})C)\] for all $\bm{p}\in\mathcal{P}$ and $k\in\mathbb{N}$. Using the Schur complement, we find, for all $\bm{p}\in\mathcal{P}$ and $k\in\mathbb{N}$,
	\begin{equation}
		\begin{bmatrix}\begin{smallmatrix}
			P(p_k) & A^\top(p_k)\\
			\star & S(p_{k+1})
		\end{smallmatrix}\end{bmatrix} + \operatorname{He}\left(\begin{bmatrix}\begin{smallmatrix}
			C^\top\\
			0
		\end{smallmatrix}\end{bmatrix}L^\top(k,\bm{p})\begin{bmatrix}\begin{smallmatrix}
			0 & I
		\end{smallmatrix}\end{bmatrix}\right)\succ 0,
		\label{eq:interm1}
	\end{equation}
	where $S(\pi)=P^{-1}(\pi)\succ 0$, $\pi\in\mathbb{P}$, $k\in\mathbb{N}$ and $\bm{p}\in\mathcal{P}$. By pointwise application of Lemma~\ref{lem:proj-lem} for all $k\in\mathbb{N}$ and $\bm{p}\in\mathcal{P}$, there exists $L$ satisfying~\eqref{eq:interm1} if and only if
	\begin{equation}
		P(\pi)\succ 0\text{ and }\begin{bmatrix}\begin{smallmatrix}
			C & 0
		\end{smallmatrix}\end{bmatrix}_{\perp}^\top\begin{bmatrix}\begin{smallmatrix}
			P(\pi) & A^\top(\pi)\\
			\star & S(\pi_+)
		\end{smallmatrix}\end{bmatrix}\begin{bmatrix}\begin{smallmatrix}
			C & 0
		\end{smallmatrix}\end{bmatrix}_{\perp}\succ 0,
		\label{eq:Cperp}
	\end{equation}
	for all $\pi_+,\pi\in\mathbb{P}$. Using continuity of $P$, $S$ and $A$, we can apply Lemma~\ref{lem:finsler} to conclude that the second condition in~\eqref{eq:Cperp} holds if and only if there exists $\mu\in\mathbb{R}$ such that 
 \begin{equation*}
     \begin{bmatrix}\begin{smallmatrix}
        P(\pi) & A^\top(\pi)\\
        \star & S(\pi_+)
    \end{smallmatrix}\end{bmatrix} + \mu\begin{bmatrix}\begin{smallmatrix}
        C^\top\\
        0
    \end{smallmatrix}\end{bmatrix}\begin{bmatrix}\begin{smallmatrix}
        C & 0
    \end{smallmatrix}\end{bmatrix}\succ 0\text{ for all }\pi_+,\pi\in\mathbb{P},
 \end{equation*}
 which, using Schur complement, is equivalent to $P(\pi) - A^\top(\pi)P(\pi_+)A(\pi) + \mu C^\top C \succ 0$ for all $\pi_+,\pi\in\mathbb{P}$. It follows, due to strict polytopicity of~\eqref{eq:system}, that $\bar{P}_i\succ 0$ and $\bar{P}_i-A_i^\top \bar{P}_jA_i + \mu C^\top C\succ 0$ for all $i,j\in\mathcal{N}$. Since $C^\top C\succcurlyeq 0$,~\eqref{eq:poly-q-detectability-analysis} holds with $\bar{\mu}>\max\{0,\mu\}$ and $\bar{P}_i\leftarrow (1/\bar{\mu})\bar{P}_i$.
\end{proof}
\noindent The conditions above provide a non-conservative test for poly-Q detectability of strictly polytopic systems~\eqref{eq:system}, directly in line with the LTI test~\eqref{eq:lti-det-lmi}. Hence, we know, for any strictly polytopic system~\eqref{eq:system}, that, if the conditions in Theorem~\ref{thm:poly-q-detectability-analysis} are infeasible, then the system~\eqref{eq:system} is not poly-Q detectable. 
After obtaining a positive-definite solution to~\eqref{eq:poly-q-detectability-analysis}, we can construct an observer gain in~\eqref{eq:poly-q-detectability-analysis-gain}, which renders~\eqref{eq:error-system} poly-QS and whose implementation, importantly, only requires $p_{k}$ at time $k\in\mathbb{N}$.
\setcounter{thm}{\theremark}\stepcounter{remark}
\begin{rem}\label{rem:polytopic-L}
    Surprisingly, $L$ in~\eqref{eq:poly-q-detectability-analysis-gain} is polytopic and depends only on the current value of the parameters $p_k$ despite the fact that the considered detectability notion in Definition~\ref{dfn:poly-quadratic-detectability} allows $L$ to depend arbitrarily on the entire parameter sequence $\bm{p}$. Hence, a strictly polytopic system~\eqref{eq:system} is poly-Q detectable (without any prior restrictions on $L(k,\bm{p})$) if and only if it admits a polytopic $L(k,\bm{p})=\bar{L}(p_k)$. In other words, if the system~\eqref{eq:system} is poly-Q detectable using the \emph{complex} observer structure, then it is also poly-Q detectable using the \emph{simpler} observer structure, and, hence, the added complexity does not help us (and knowing other values of $p_l$ for $l\neq k$ is not helpful). Moreover, existence of an observer of either structure is equivalent to feasibility of the LMIs in Theorem~\ref{thm:poly-q-detectability-analysis}, as summarized in Fig.~\ref{fig:poly-q-det}. 
\end{rem}

\begin{figure}[!tb]
    \centering
    \includegraphics[scale=.16]{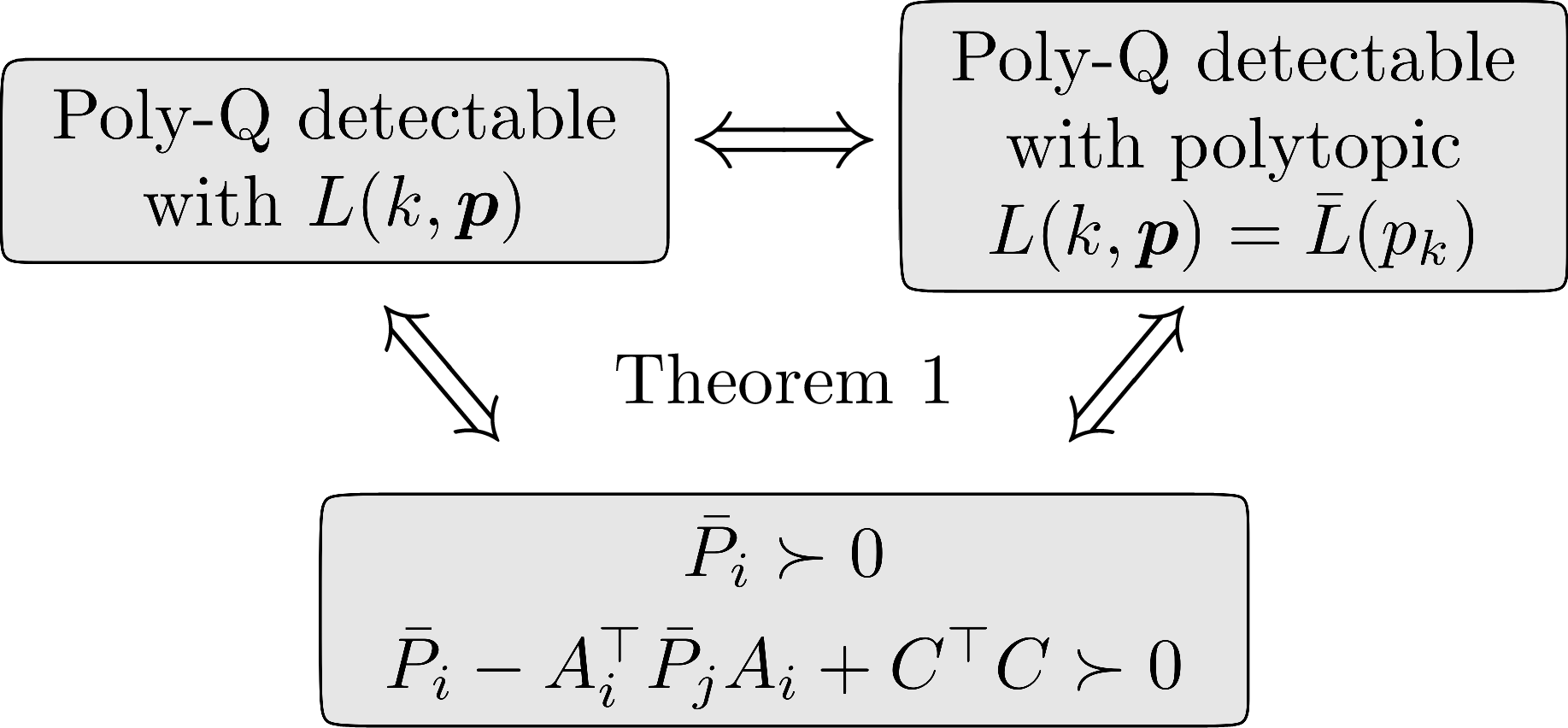}
    \caption{Overview of poly-Q detectability conditions.}
    \label{fig:poly-q-det}
\end{figure}

We conclude this section by showing that Theorem~\ref{thm:poly-q-detectability-analysis}, indeed, generalizes~\eqref{eq:lti-det-lmi}. We recover~\eqref{eq:lti-det-lmi} by substituting $P_i\leftarrow P$ and $A_i\leftarrow A$, with $P\in\mathbb{S}_{\succ 0}^{n_x}$ and $A\in\mathbb{R}^{n_x\times n_x}$, in~\eqref{eq:poly-q-detectability-analysis}. Making the same substitutions in~\eqref{eq:poly-q-detectability-analysis-gain} yields
\begin{equation*}
    L = -A(P+C^\top C)^{-1}C^\top = -ASC^\top(I+CSC^\top)^{-1},
\end{equation*}
where we used the Woodbury identity to obtain the latter expression, in which $P = S^{-1}$. If $P$ satisfies~\eqref{eq:lti-det-lmi}, this is precisely the observer gain obtained from~\eqref{eq:lti-det-lmi} in the LTI setting, see, e.g.,~\cite{Duan2013}. Thus, Theorem~\ref{thm:poly-q-detectability-analysis} truly generalizes the well-known Lyapunov-based stabilizability results from the LTI setting to polytopic LPV systems~\eqref{eq:system}.

\subsection{Poly-Q stabilizability test}\label{sec:stabilizability}
Inspired by the generalization of~\eqref{eq:lti-det-lmi} in Theorem~\ref{thm:poly-q-detectability-analysis}, we aim to generalize the stabilizability test in~\eqref{eq:lti-stab-lmi}. First, we state non-conservative parameter-dependent conditions.
\setcounter{thm}{\thetheorem}\stepcounter{theorem}
\begin{thm}\label{thm:poly-quadratic-stabilizability-nec-suff}
    The polytopic system~\eqref{eq:system} is poly-Q stabilizable, if and only if there exist $\bar{S}_i\in\mathbb{S}^{n_x}_{\succ 0}$ and $\bar{P}_i=\bar{S}_i^{-1}$, $i\in\mathcal{N}$, such that $S(\pi)=P^{-1}(\pi)$, with $P$ in~\eqref{eq:poly-lyap}, satisfies
    \begin{equation}
        S(\pi_+)-A(\pi)S(\pi)A^\top(\pi) + BB^\top\succ 0,
        \label{eq:poly-quadratic-stabilizability-nec-suff}
    \end{equation}
    for all $\pi_+,\pi\in\mathbb{P}$. Moreover, in that case, 
	\begin{equation}
		K(k,\bm{p}) = -B^\top(S(p_{k+1})+BB^\top)^{-1}A(p_k),
		\label{eq:poly-q-stabilizability-analysis-gain}
	\end{equation}
    for $k\in\mathbb{N}$ and $\bm{p}\in\mathcal{P}$, renders the closed-loop system~\eqref{eq:closed-loop} poly-QS with poly-QLF~\eqref{eq:poly-lyap}.
\end{thm}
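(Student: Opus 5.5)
The plan is to prove the two directions separately, working with the dual (inverse) form of the Lyapunov inequality and using Schur complements. Throughout, we use that $P(\pi)=\sum_i\xi_i(\pi)\bar P_i\succ 0$ whenever $\bar P_i\succ 0$. This holds because $\xi(\pi)\in\mathbb{X}$, so $S(\pi)=P^{-1}(\pi)$ is well defined. It is also continuous and bounded on $\mathbb{P}$, with uniform bounds $\min_i\lambda_{\min}(\bar P_i)I\preccurlyeq P(\pi)\preccurlyeq\max_i\lambda_{\max}(\bar P_i)I$; these give~\eqref{eq:lyap-bounds}.

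\emph{Sufficiency.} Fix $\pi=p_k$ and $\pi_+=p_{k+1}$, and write $S_+=S(\pi_+)$, $A=A(\pi)$, $K=-B^\top(S_++BB^\top)^{-1}A$ as in~\eqref{eq:poly-q-stabilizability-analysis-gain}. Then $A+BK=(I-BB^\top(S_++BB^\top)^{-1})A=S_+(S_++BB^\top)^{-1}A$. Hence
\[
(A+BK)^\top P(\pi_+)(A+BK)=A^\top(S_++BB^\top)^{-1}S_+(S_++BB^\top)^{-1}A\preccurlyeq A^\top(S_++BB^\top)^{-1}A .
\]
The inequality uses $S_+\preccurlyeq S_++BB^\top$, which gives $(S_++BB^\top)^{-1}S_+(S_++BB^\top)^{-1}\preccurlyeq(S_++BB^\top)^{-1}$. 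This step mirrors the argument for $Q_{ij}$ in the proof of Theorem~\ref{thm:poly-q-detectability-analysis}.

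Condition~\eqref{eq:poly-quadratic-stabilizability-nec-suff} reads $S_++BB^\top\succ ASA^\top$ with $S=S(\pi)\succ 0$. By a Schur complement, applied twice, this is equivalent to $P(\pi)-A^\top(S_++BB^\top)^{-1}A\succ 0$. Combining the two displays yields $P(\pi)-(A+BK)^\top P(\pi_+)(A+BK)\succ 0$ for all $\pi,\pi_+\in\mathbb{P}$.

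To get the uniform decrease constant $a_3$ in~\eqref{eq:lyap-desc}, I note that the left-hand side is bounded below uniformly over the parameters. First, the image $\xi(\mathbb{P})\subseteq\mathbb{X}$ and $A$ is polytopic, so every quantity above depends continuously on $(\xi(\pi),\xi(\pi_+))$. Second, I would argue via the closure of $\xi(\mathbb{P})\times\xi(\mathbb{P})$, which is compact in $\mathbb{X}\times\mathbb{X}$. This requires the strict inequality to hold on that closure. The cleanest route is to interpret~\eqref{eq:poly-quadratic-stabilizability-nec-suff}, as in the paper, uniformly (i.e.\ $\succcurlyeq\epsilon I$), or to extend it by continuity. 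This is the main technical point I expect to need care. It matches how uniformity is handled implicitly in Theorem~\ref{thm:poly-q-detectability-analysis}.

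\emph{Necessity.} Suppose $K(k,\bm p)$ renders~\eqref{eq:closed-loop} poly-QS with $P$ as in~\eqref{eq:poly-lyap}. We may take $\bar P_i\succ 0$, arguing from~\eqref{eq:lyap-bounds} and the polytopic parametrization. Then for all $\bm p\in\mathcal{P}$ and $k$, we have $P(p_k)\succ(A+BK)^\top P(p_{k+1})(A+BK)$. Taking Schur complements gives $S(p_{k+1})-(A+BK)S(p_k)(A+BK)^\top\succ 0$, which we write in matrix form as
\[
\begin{bmatrix}S(p_{k+1})&A(p_k)\\ \star&P(p_k)\end{bmatrix}+\operatorname{He}\Big(\begin{bmatrix}B\\0\end{bmatrix}K\begin{bmatrix}0&I\end{bmatrix}\Big)\succ 0 .
\]
Any pair $(\pi,\pi_+)\in\mathbb{P}^2$ occurs as $(p_k,p_{k+1})$ for some admissible sequence. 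We then apply Lemma~\ref{lem:proj-lem} pointwise, which gives solvability in $K$ if and only if $P(\pi)\succ 0$ and $[B^\top\ 0]_\perp^\top(\cdot)[B^\top\ 0]_\perp\succ 0$. Next we apply Lemma~\ref{lem:finsler}, which gives the equivalent condition that the block matrix plus $\mu\,\mathrm{diag}(BB^\top,0)$ is positive definite for some $\mu$. Here $\mu$ may be chosen uniformly using continuity, exactly as in Theorem~\ref{thm:poly-q-detectability-analysis}. A final Schur complement gives $S(\pi_+)-A(\pi)S(\pi)A^\top(\pi)+\mu BB^\top\succ 0$.

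Finally, take $\bar\mu>\max\{0,\mu\}$ and rescale $\bar S_i\leftarrow\bar S_i/\bar\mu$, i.e.\ $\bar P_i\leftarrow\bar\mu\bar P_i$, so that $S(\pi)$ scales by $1/\bar\mu$. This yields~\eqref{eq:poly-quadratic-stabilizability-nec-suff}. Note that no strict polytopicity is needed here, since the condition is stated parameter-wise rather than at the vertices.
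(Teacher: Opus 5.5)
Your proposal follows the paper's proof essentially step for step: for sufficiency, the identity $A+BK=S(\pi_+)WA$ with $W=(S(\pi_+)+BB^\top)^{-1}$, the bound $WS(\pi_+)W\preccurlyeq W$ and a double Schur complement; for necessity, a Schur complement, Lemma~\ref{lem:proj-lem}, Lemma~\ref{lem:finsler}, a final Schur complement and the rescaling $\bar S_i\leftarrow\bar S_i/\bar\mu$ (your block ordering is only a permutation of the paper's, so the projection lemma actually returns $S(\pi_+)\succ 0$ rather than $P(\pi)\succ 0$, which is harmless). The delicate points are passed over in the same way by the paper, so they are shared caveats rather than defects of your route:
\begin{itemize}
\item the uniform constant $a_3$, which you rightly flag (extending a strict inequality by continuity gives only a non-strict one, so you really need compactness of $\mathbb{P}$ or a uniform version of~\eqref{eq:poly-quadratic-stabilizability-nec-suff});
\item the compactness that Lemma~\ref{lem:finsler} requires;
\item the claim $\bar P_i\succ 0$ in the necessity part, which~\eqref{eq:lyap-bounds} guarantees for strictly polytopic systems (take $\pi$ with $\xi(\pi)=\bm{e}_i$) but not for general polytopic ones.
\end{itemize}
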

\vspace*{-\belowdisplayskip}
\begin{proof}
    \underline{Sufficiency:} Suppose there exist $\bar{S}_i\in\mathbb{S}^{n_x}_{\succ 0}$ and $\bar{P}_i=\bar{S}_i^{-1}$, $i\in\mathcal{N}$, such that $S(\pi)=P^{-1}(\pi)$ with $P$ in~\eqref{eq:poly-lyap} satisfies~\eqref{eq:poly-quadratic-stabilizability-nec-suff} for all $\pi_+,\pi\in\mathbb{P}$. Let $\widehat{\mathcal{S}}(\pi) \coloneqq (S(\pi)+BB^\top)^{-1}$, $\pi\in\mathbb{P}$. Using Schur complement twice yields
 \begin{equation}\label{eq:twice-schur2}
    P(\pi) - A^\top(\pi)\widehat{\mathcal{S}}(\pi_+)A(\pi)\succ 0,
 \end{equation}
for all $\pi_+,\pi\in\mathbb{P}$. It follows, from $\bar{S}_i\succ 0$, $i\in\mathcal{N}$, that $S(\pi)\succ 0$ for all $\pi\in\mathbb{P}$. Let $K$ be as in~\eqref{eq:poly-q-stabilizability-analysis-gain} such that 
\begin{align*}
    &A_{\mathrm{cl}}(k,\bm{p})\coloneqq A(p_k)+BK(k,\bm{p})=\\
    &\quad(I-BB^\top\widehat{\mathcal{S}}(p_{k+1}))A(p_k)=S(p_{k+1})\widehat{\mathcal{S}}(p_{k+1})A(p_k),
\end{align*}
for all $k\in\mathbb{N}$, $\bm{p}\in\mathcal{P}$. We denote, for $k\in\mathbb{N}$, $\bm{p}\in\mathcal{P}$, \begin{align*}
    &Q(k,\bm{p}) \coloneqq P(p_k)-A_{\mathrm{cl}}^\top(k,\bm{p}) P(p_{k+1})A_{\mathrm{cl}}(k,\bm{p}).
\end{align*}
Next, we show that $Q(k,\bm{p})\succ 0$ for all $k\in\mathbb{N}$ and $\bm{p}\in\mathcal{P}$. It holds that $S(\pi)\preccurlyeq-\widehat{\mathcal{S}}^{-1}(\pi)$ for all $\pi\in\mathbb{P}$, which, using \[Q(k,\bm{p})=P(p_k)-A^\top(p_k)\widehat{\mathcal{S}}(p_{k+1}) S(p_{k+1})\widehat{\mathcal{S}}(p_{k+1})A(p_k),\] for $k\in\mathbb{N}$, $\bm{p}\in\mathcal{P}$, implies that, for all $k\in\mathbb{N}$ and $\bm{p}\in\mathcal{P}$,
\begin{align*}
    &Q(k,\bm{p})\succcurlyeq P(p_k)- A^\top(p_k)\widehat{\mathcal{S}}(p_{k+1})A(p_k)\succ 0,
\end{align*}
due to~\eqref{eq:twice-schur2}.
It follows, from $Q(k,\bm{p})\succ 0$ and $P$ being uniformly bounded on $\mathbb{P}$, that $V$ in~\eqref{eq:poly-lyap} is a poly-QLF for~\eqref{eq:closed-loop} and, thus, ~\eqref{eq:closed-loop} with $K$ in~\eqref{eq:poly-q-stabilizability-analysis-gain} is poly-QS.

    \underline{Necessity:} Suppose that system~\eqref{eq:system} is poly-Q stabilizable.
    Then, $\bar{P}_i\succ 0$, $\bar{S}_i=\bar{P}_i^{-1}$, $i\in\mathcal{N}$, and $S(\pi)=P^{-1}(\pi)\succ 0$, $\pi\in\mathbb{P}$. By Schur complement,~\eqref{eq:lyap-desc} yields, for all $\bm{p}\in\mathcal{P}$, $k\in\mathbb{N}$,
	\begin{equation}
		\begin{bmatrix}\begin{smallmatrix}
			P(p_k) & \star\\
			A(p_k) & S(p_{k+1}) 
		\end{smallmatrix}\end{bmatrix} + \operatorname{He}\left(\begin{bmatrix}\begin{smallmatrix}
			0\\
			B
		\end{smallmatrix}\end{bmatrix}K(k,\bm{p})\begin{bmatrix}\begin{smallmatrix}
			I & 0
		\end{smallmatrix}\end{bmatrix}\right)\succ 0.
		\label{eq:interm3}
	\end{equation}
	By pointwise application of Lemma~\ref{lem:proj-lem} for all $k\in\mathbb{N}$ and $\bm{p}\in\mathcal{P}$, there exists $K$ satisfying~\eqref{eq:interm3} if and only if
	\begin{equation*}
		S(\pi)\succ 0\text{ and }\begin{bmatrix}\begin{smallmatrix}
			0 & B^\top
		\end{smallmatrix}\end{bmatrix}_{\perp}^\top\begin{bmatrix}\begin{smallmatrix}
			P(\pi) & \star\\
			A(\pi) & S(\pi_+) 
		\end{smallmatrix}\end{bmatrix}\begin{bmatrix}\begin{smallmatrix}
			0 & B^\top
		\end{smallmatrix}\end{bmatrix}_{\perp}\succ 0,
	\end{equation*}
	for all $\pi_+,\pi\in\mathbb{P}$. We can complete the proof by applying Lemma~\ref{lem:finsler} pointwise for all $\pi_+,\pi\in\mathbb{P}$ and following the same steps as at the end of the proof of Theorem~\ref{thm:poly-q-detectability-analysis}.
\end{proof}
\noindent Despite starting from the general stabilizability notion in Definition~\ref{dfn:poly-q-stabilizability}, the controller gain in~\eqref{eq:poly-q-stabilizability-analysis-gain} only depends on $p_k$ and $p_{k+1}$ and features a specific structure. As discussed in Remark~\ref{rem:polytopic-L} for the observer design problem, this means that if the system~\eqref{eq:system} is poly-Q stabilizable by a \emph{complex} controller, then it is also poly-Q stabilizable using the \emph{simpler} controller structure in~\eqref{eq:poly-q-stabilizability-analysis-gain} (and, hence, knowing other values of $p_l$ for $l\not\in\{k,k+1\}$ is not helpful). Implementation of~\eqref{eq:poly-q-stabilizability-analysis-gain} only requires $p_k$ and $p_{k+1}$ at time $k\in\mathbb{N}$. If $p_{k+1}$ is not known ahead of time, an estimate $\hat{p}_{k+1}$ can be used and Theorem~\ref{thm:poly-quadratic-stabilizability-nec-suff} naturally leads to input-to-state stability with respect to $p_{k+1}-\hat{p}_{k+1}$.

To link Theorem~\ref{thm:poly-quadratic-stabilizability-nec-suff} to the LTI result~\eqref{eq:lti-stab-lmi}, we substitute $A(\pi)\leftarrow A$ and $S(\pi)\leftarrow S$, with $A\in\mathbb{R}^{n_x\times n_x}$ and $S\in\mathbb{S}^{n_x}_{\succ 0}$, in~\eqref{eq:poly-q-stabilizability-analysis-gain} to recover (using the Woodbury identity~\cite{Golub1996})
\begin{equation*}
	K = -B^\top(S+BB^\top)^{-1}\bar{A} = -(I+B^\top PB)^{-1}B^\top P\bar{A},
\end{equation*}
with $S=P^{-1}$ satisfying~\eqref{eq:lti-stab-lmi}, which is precisely the stabilizing controller gain obtained from~\eqref{eq:lti-stab-lmi} in the LTI setting, see, e.g.,~\cite{Duan2013}. Similarly, the conditions in Theorem~\ref{thm:poly-quadratic-stabilizability-nec-suff} reduce to~\eqref{eq:lti-stab-lmi} when making the same substitutions and, thus, generalize~\eqref{eq:lti-stab-lmi} to the polytopic LPV setting. To systematically solve the conditions in Theorem~\ref{thm:poly-quadratic-stabilizability-nec-suff}, we typically translate them to conditions in terms of the vertices. The following corollary follows by evaluating~\eqref{eq:poly-quadratic-stabilizability-nec-suff} at the vertices and, hence, its proof is omitted.
\setcounter{thm}{\thecorollary}\stepcounter{corollary}
\begin{cor}\label{cor:nec-cond-stab}
    The strictly polytopic system~\eqref{eq:system} is poly-Q stabilizable only if there exist $\bar{S}_i\in\mathbb{S}^{n_x}_{\succ 0}$, $i\in\mathcal{N}$, such that
    \begin{equation}
    \bar{S}_j-A_i\bar{S}_iA_i^\top + BB^\top \succ 0,\text{ for all }i,j\in\mathcal{N}.
    \label{eq:poly-q-stabilizability-analysis-nec}
    \end{equation}
\end{cor}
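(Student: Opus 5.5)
Corollary~\ref{cor:nec-cond-stab} follows by combining the necessity part of Theorem~\ref{thm:poly-quadratic-stabilizability-nec-suff} with strict polytopicity. First, suppose the strictly polytopic system~\eqref{eq:system} is poly-Q stabilizable. Theorem~\ref{thm:poly-quadratic-stabilizability-nec-suff} then gives matrices $\bar{S}_i\in\mathbb{S}^{n_x}_{\succ 0}$ and $\bar{P}_i=\bar{S}_i^{-1}$, $i\in\mathcal{N}$. With $P$ as in~\eqref{eq:poly-lyap} and $S(\pi)=P^{-1}(\pi)$, these satisfy~\eqref{eq:poly-quadratic-stabilizability-nec-suff} for all $\pi_+,\pi\in\mathbb{P}$.

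The key step is to evaluate this inequality at parameter values that select single vertices. Strict polytopicity gives, for each $i\in\mathcal{N}$, a point $\pi^{(i)}\in\mathbb{P}$ with $\xi(\pi^{(i)})=\bm{e}_i$. At such points, \eqref{eq:poly-A} gives $A(\pi^{(i)})=A_i$ and \eqref{eq:poly-lyap} gives $P(\pi^{(i)})=\bar{P}_i$. Hence $S(\pi^{(i)})=\bar{P}_i^{-1}=\bar{S}_i$.

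Now take $\pi=\pi^{(i)}$ and $\pi_+=\pi^{(j)}$ for arbitrary $i,j\in\mathcal{N}$. This choice is allowed because~\eqref{eq:poly-quadratic-stabilizability-nec-suff} holds for all pairs $(\pi_+,\pi)\in\mathbb{P}\times\mathbb{P}$ independently. Substituting yields $\bar{S}_j-A_i\bar{S}_iA_i^\top+BB^\top\succ 0$, which is exactly~\eqref{eq:poly-q-stabilizability-analysis-nec}.

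The only subtle point is that $S(\pi)$ is the inverse of a polytopic $P$, so it is not itself polytopic in the $\bar{S}_i$. We therefore cannot read off vertex values of $S$ by linearity. At the vertex points, however, $P$ collapses to a single $\bar{P}_i$, so $S$ equals the corresponding $\bar{S}_i$ exactly and the issue disappears. Positivity $\bar{S}_i\succ 0$ is already supplied by Theorem~\ref{thm:poly-quadratic-stabilizability-nec-suff}, so no rescaling step like the one in the proof of Theorem~\ref{thm:poly-q-detectability-analysis} is needed.
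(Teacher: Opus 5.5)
Your proof is correct and takes the same route the paper indicates (it omits the proof, saying only that the corollary follows by evaluating the inequality at the vertices). You invoke the necessity direction of Theorem~\ref{thm:poly-quadratic-stabilizability-nec-suff}, then evaluate~\eqref{eq:poly-quadratic-stabilizability-nec-suff} at $\pi=\pi^{(i)}$, $\pi_+=\pi^{(j)}$ with $\xi(\pi^{(i)})=\bm{e}_i$ (guaranteed by strict polytopicity), where $A$, $P$ and hence $S=P^{-1}$ reduce exactly to $A_i$, $\bar{P}_i$ and $\bar{S}_i$, even though $S$ itself is not polytopic.
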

\vspace*{-\belowdisplayskip}
\noindent It follows, for any strictly polytopic system~\eqref{eq:system}, that, if the conditions in Corollary~\ref{cor:nec-cond-stab} are infeasible, then the system~\eqref{eq:system} is not poly-Q stabilizable. Unfortunately, there is no proof--to the best of our knowledge--that ~\eqref{eq:poly-q-stabilizability-analysis-nec} is\emph{ sufficient}. The challenge is that it is extremely difficult to derive, from~\eqref{eq:poly-q-stabilizability-analysis-nec}, a condition that is both linear in $\bar{P}_i=\bar{S}_i^{-1}$ and that does not contain any products between matrices which depend on the same index $i$ or $j$, which is crucial in proving sufficiency of~\eqref{eq:poly-q-stabilizability-analysis-nec}. By introducing additional slack variables in Theorem~\ref{thm:poly-quadratic-stabilizability-nec-suff}, however, we obtain the conditions below, which are \emph{sufficient}.
\setcounter{thm}{\thetheorem}\stepcounter{theorem}
\begin{thm}\label{thm:poly-q-stabilizability-analysis}
	The polytopic system~\eqref{eq:system} is poly-Q stabilizable, if there exist matrices $\bar{S}_i\in\mathbb{S}^{n_x}$ and $X_{i}\in\mathbb{R}^{n_x\times n_x}$, $\bar{P}_i=\bar{S}_i^{-1}$, $i\in\mathcal{N}$, such that
	\begin{equation}
		\begin{bmatrix}\begin{smallmatrix}
			X_i+X_i^\top -A_i\bar{S}_iA_i^\top + BB^\top & \star\\
			X_i & \bar{S}_j
		\end{smallmatrix}\end{bmatrix}\succ 0,\text{ for all }i,j\in\mathcal{N}.
		\label{eq:poly-q-stabilizability-analysis}
	\end{equation}
	Moreover, in that case, $K$ in~\eqref{eq:poly-q-stabilizability-analysis-gain} with $P$ in~\eqref{eq:poly-lyap} and $S(\pi)=P^{-1}(\pi)$ renders the closed-loop system~\eqref{eq:closed-loop} poly-QS with poly-QLF~\eqref{eq:poly-lyap}.
\end{thm}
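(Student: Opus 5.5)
The plan is to show that feasibility of~\eqref{eq:poly-q-stabilizability-analysis} implies the parameter-dependent condition~\eqref{eq:poly-quadratic-stabilizability-nec-suff} of Theorem~\ref{thm:poly-quadratic-stabilizability-nec-suff}. The sufficiency part of that theorem then gives poly-QS of~\eqref{eq:closed-loop} with $K$ in~\eqref{eq:poly-q-stabilizability-analysis-gain} and poly-QLF~\eqref{eq:poly-lyap}.

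First, the lower-right block of~\eqref{eq:poly-q-stabilizability-analysis} gives $\bar{S}_j\succ 0$, so $\bar{P}_j=\bar{S}_j^{-1}\succ 0$ for all $j\in\mathcal{N}$, and hence $P(\pi)\succ 0$ and $S(\pi)=P^{-1}(\pi)\succ 0$. Taking the Schur complement with respect to $\bar{S}_j$ yields
\[
X_i+X_i^\top-A_i\bar{S}_iA_i^\top+BB^\top-X_i^\top\bar{P}_jX_i\succ 0 \quad\text{for all } i,j\in\mathcal{N}.
\]
This form is affine in $\bar{P}_j$. This is the key point: the slack variable $X_i$ decouples $j$ from $i$, which is exactly what fails for~\eqref{eq:poly-q-stabilizability-analysis-nec}. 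Multiplying by $\xi_j(\pi_+)$ and summing over $j$ gives $X_i+X_i^\top-A_i\bar{S}_iA_i^\top+BB^\top-X_i^\top P(\pi_+)X_i\succ 0$ for all $\pi_+\in\mathbb{P}$. Next, I use the completion-of-squares bound $(X_i-S(\pi_+))^\top P(\pi_+)(X_i-S(\pi_+))\succcurlyeq 0$, which is equivalent to $X_i^\top P(\pi_+)X_i\succcurlyeq X_i+X_i^\top-S(\pi_+)$. Substituting it yields
\[
S(\pi_+)-A_i\bar{S}_iA_i^\top+BB^\top\succ 0 \quad\text{for all } i\in\mathcal{N},\ \pi_+\in\mathbb{P}.
\]

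It remains to pass from the vertex terms $A_i\bar{S}_iA_i^\top$ to $A(\pi)S(\pi)A^\top(\pi)$. This is the step I expect to be the main obstacle, because $S(\pi)$ is the inverse of a convex combination rather than a convex combination. I will handle it via joint convexity of the matrix-fractional map $(A,P)\mapsto AP^{-1}A^\top$. For each $i$, the matrix $\begin{bmatrix}\begin{smallmatrix} A_i\bar{S}_iA_i^\top & A_i\\ A_i^\top & \bar{P}_i\end{smallmatrix}\end{bmatrix}$ is positive semidefinite, by Schur complement with respect to $\bar{P}_i\succ 0$. Multiplying by $\xi_i(\pi)\geqslant 0$ and summing gives $\begin{bmatrix}\begin{smallmatrix} \sum_i\xi_i(\pi)A_i\bar{S}_iA_i^\top & A(\pi)\\ A^\top(\pi) & P(\pi)\end{smallmatrix}\end{bmatrix}\succcurlyeq 0$, using~\eqref{eq:poly-A} and~\eqref{eq:poly-lyap}. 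A Schur complement with respect to $P(\pi)\succ 0$ then gives $A(\pi)S(\pi)A^\top(\pi)\preccurlyeq\sum_{i\in\mathcal{N}}\xi_i(\pi)A_i\bar{S}_iA_i^\top$.

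Finally, I multiply the vertex inequality from the second paragraph by $\xi_i(\pi)$ and sum over $i$, using $\sum_i\xi_i(\pi)=1$. This gives $S(\pi_+)-\sum_i\xi_i(\pi)A_i\bar{S}_iA_i^\top+BB^\top\succ 0$. Combined with the bound just derived, it yields~\eqref{eq:poly-quadratic-stabilizability-nec-suff} for all $\pi_+,\pi\in\mathbb{P}$. Theorem~\ref{thm:poly-quadratic-stabilizability-nec-suff} then completes the proof, including the claim about $K$ in~\eqref{eq:poly-q-stabilizability-analysis-gain}.
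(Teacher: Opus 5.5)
Your proof is correct and rests on the same ingredients as the paper's:
\begin{enumerate*}[label=(\alph*)]
\item a Schur complement that makes the LMI affine in $\bar{P}_j$;
\item convex combination with the weights $\xi_i(\pi)$ and $\xi_j(\pi_+)$;
\item the Schur-complement lifting of $A_i\bar{S}_iA_i^\top$ through $\bar{P}_i$, which the paper builds directly into the $3\times 3$ matrix~\eqref{eq:interm9};
\item the bound $X^\top P(\pi_+)X\succcurlyeq X+X^\top-S(\pi_+)$, i.e., Lemma~\ref{lem:youngs} with $Y=I$.
\end{enumerate*}
The only difference is the order of the steps: the paper sums over $i,j$ simultaneously and eliminates $X(\pi)$ at the end, whereas you eliminate $X_i$ at the vertices first and then use matrix-fractional convexity. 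Your ordering has the small bonus of exhibiting the intermediate condition $S(\pi_+)-A_i\bar{S}_iA_i^\top+BB^\top\succ 0$ for all $i\in\mathcal{N}$, $\pi_+\in\mathbb{P}$, which shows that the slack variables are needed only to handle the $\pi_+$-dependence.
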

\vspace*{-\belowdisplayskip}
\begin{proof}
    Suppose that there exist $\bar{P}_i\in\mathbb{S}^{n_x}$ and $X_i\in\mathbb{R}^{n_x\times n_x}$, $i\in\mathcal{N}$, such that~\eqref{eq:poly-q-stabilizability-analysis} holds with $\bar{S}_i\coloneqq \bar{P}^{-1}_i$, $i\in\mathcal{N}$. It follows that $\bar{S}_i\succ 0$, $i\in\mathcal{N}$, and, by congruence transformation with $\operatorname{diag}\{I,\bar{P}_j\}$ and Schur complement, it follows from~\eqref{eq:poly-q-stabilizability-analysis} that
    \begin{equation}
        \begin{bmatrix}\begin{smallmatrix}
			\bar{P}_i & \star & \star\\
			A_i & \operatorname{He}(X_i) +BB^\top & \star\\
			0 & \bar{P}_jX_i & \bar{P}_j
		\end{smallmatrix}\end{bmatrix}\succ 0,\text{ for all }i,j\in\mathcal{N}.
        \label{eq:interm9}
    \end{equation}
    Multiplying~\eqref{eq:interm9} by $\xi_i(\pi)\xi_j(\pi_+)$ and summing over all $i,j\in\mathcal{N}$, we obtain, for all $\pi_+,\pi\in\mathbb{P}$,
	\begin{equation}
		\begin{bmatrix}\begin{smallmatrix}
			P(\pi) & \star & \star\\
			A(\pi) & \operatorname{He}(X(\pi)) +BB^\top & \star\\
			0 & P(\pi_+)X(\pi) & P(\pi_+)
		\end{smallmatrix}\end{bmatrix}\succ 0,\label{eq:interm4}
	\end{equation}
	with $P(\pi)=\sum_{i\in\mathcal{N}}\xi_i(\pi)\bar{P}_i$, as in~\eqref{eq:poly-lyap}, and $X(\pi)=\sum_{i\in\mathcal{N}}\xi_i(\pi)X_i$, $\pi\in\mathbb{P}$. Since $\bar{P}_i\succ 0$, $i\in\mathcal{N}$, we have $P(\pi)\succ 0$, $\pi\in\mathbb{P}$. From~\eqref{eq:interm4}, we obtain $X(\pi)+X^\top(\pi)-X^\top(\pi)P(\pi_+)X(\pi)- A(\pi)S(\pi)A^\top(\pi)+BB^\top\succ 0$,
for all $\pi_+,\pi\in\mathbb{P}$. By pointwise application of Lemma~\ref{lem:youngs}, we conclude that $S$ satisfies~\eqref{eq:poly-quadratic-stabilizability-nec-suff} for all $\pi_+,\pi\in\mathbb{P}$. Thus, by Theorem~\ref{thm:poly-quadratic-stabilizability-nec-suff}, $K$ as in~\eqref{eq:poly-q-stabilizability-analysis-gain} with $S(\pi)=P^{-1}(\pi)$, $\pi\in\mathbb{P}$, and $P$ as in~\eqref{eq:poly-lyap} renders~\eqref{eq:closed-loop} poly-QS with poly-QLF~\eqref{eq:poly-lyap}.
\end{proof}
\noindent Theorem~\ref{thm:poly-q-stabilizability-analysis} provides LMI-based conditions to guarantee that a given polytopic system~\eqref{eq:system} is poly-Q stabilizable. Observe that, in contrast with~\eqref{eq:poly-q-detectability-analysis-gain} in the context of poly-Q detectability, $K$ in~\eqref{eq:poly-q-stabilizability-analysis-gain} is not polytopic and depends on both $p_k$ and $p_{k+1}$. This emphasizes that the obtained polytopicity and $p_k$-dependence of $L$ in~\eqref{eq:poly-q-detectability-analysis-gain}, as observed in Remark~\ref{rem:polytopic-L}, are not at all natural and/or obvious. Fig.~\ref{fig:poly-q-stab} summarizes all results on poly-Q stabilizability presented in this technical communique and shows how the different conditions are related.

\begin{figure}[!t]
    \centering
    \includegraphics[scale=.16]{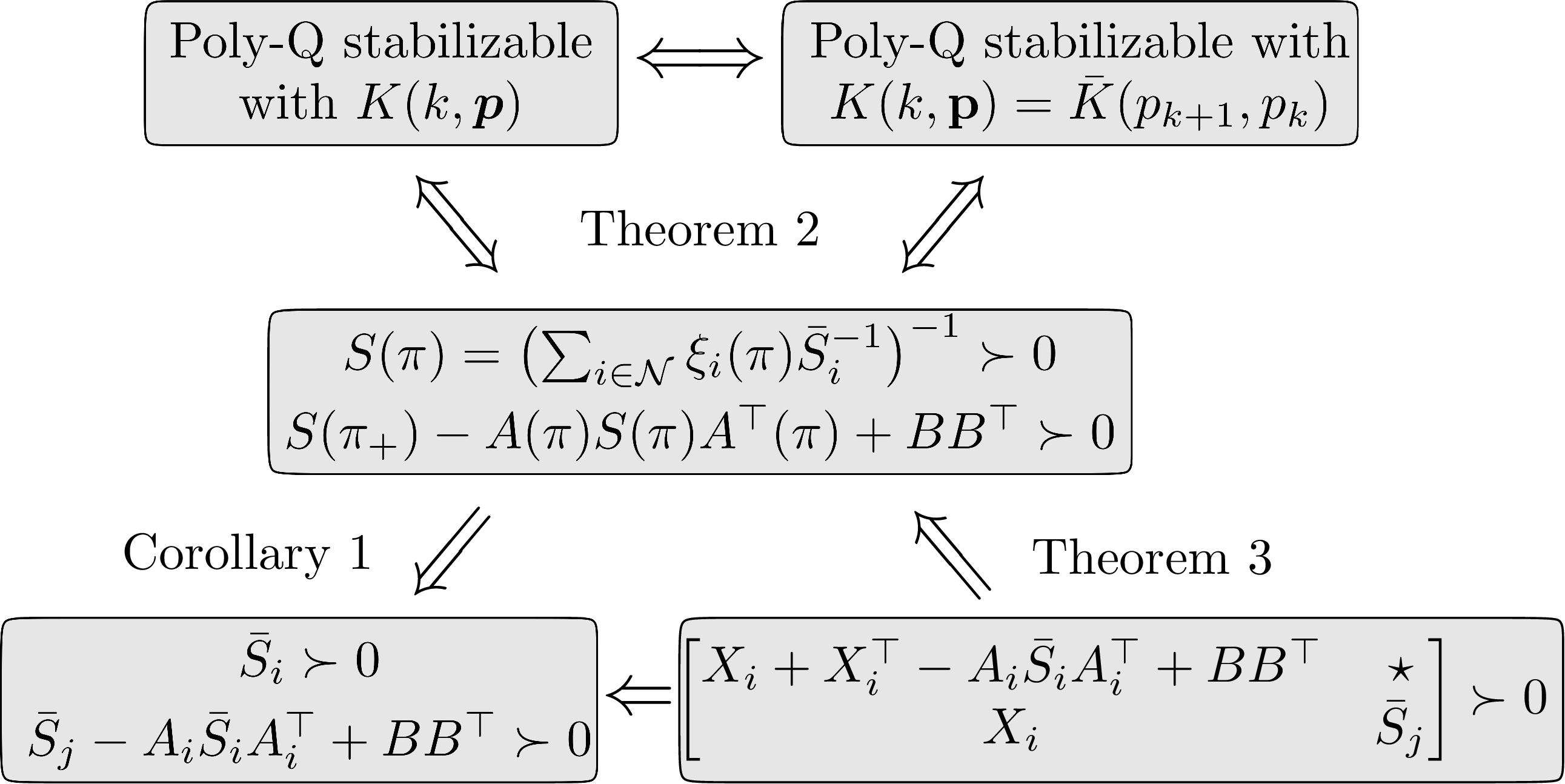}
    \caption{Overview of poly-Q stabilizability conditions.}
    \label{fig:poly-q-stab}
\end{figure}

\setcounter{thm}{\theremark}\stepcounter{remark}
\begin{rem}\label{rem:duality}
    We do not obtain duality between stabilizability and detectability as in the LTI case also for poly-Q stabilizability and poly-Q detectability. This manifests through \begin{enumerate*}[label=(\alph*)] \item the fact that $L$ in~\eqref{eq:poly-q-detectability-analysis-gain} depends only on $p_k$ whereas $K$ in~\eqref{eq:poly-q-stabilizability-analysis-gain} depends also on $p_{k+1}$, and \item all conditions in Fig.~\ref{fig:poly-q-det} being equivalent while Corollary~\ref{cor:nec-cond-stab} is not equivalent to poly-Q stabilizability (see Fig.~\ref{fig:poly-q-stab}). \end{enumerate*} Interestingly, duality is recovered when considering common quadratic Lyapunov functions, i.e., $\bar{P}_i=\bar{P}_j$ for all $i,j\in\mathcal{N}$. Then, both $L$ in~\eqref{eq:poly-q-detectability-analysis-gain} and $K$ in~\eqref{eq:poly-q-stabilizability-analysis-gain} depend only on $p_k$ and all conditions in Fig.~\ref{fig:poly-q-stab} turn out to be equivalent using Lemma~\ref{lem:proj-lem} in the Appendix. In fact, in that case~\eqref{eq:poly-q-stabilizability-analysis-nec} is~\eqref{eq:poly-q-detectability-analysis} with $A_i\leftarrow A_i^\top$ and $C\leftarrow B^\top$.
\end{rem}

\section{Conclusions}\label{sec:conclusions}
In this technical communique, we extended celebrated detectability and stabilizability tests for LTI systems to a class of discrete-time polytopic LPV systems.

\bibliographystyle{plain}        
\bibliography{refs}           

\section*{Appendix}
\appendix

\section{Existing technical lemmas}
\setcounter{thm}{\thelemma}\stepcounter{lemma}
\begin{lem}[{\cite{Gahinet1994}}]\label{lem:proj-lem}
	Let $\Psi\in\mathbb{S}^{n}$, $\Gamma\in\mathbb{R}^{m\times n}$ and $\Omega\in\mathbb{R}^{p\times n}$. There exists $\Lambda\in\mathbb{R}^{m\times p}$ such that $\Psi + \Gamma^\top\Lambda\Omega + \Omega^\top\Lambda^\top\Gamma\succ 0$, if and only if $\Gamma_{\perp}^\top\Psi\Gamma_{\perp}\succ 0\text{ and }\Omega_\perp^\top\Psi\Omega_{\perp}\succ 0$.
\end{lem}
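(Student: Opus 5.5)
The plan is to prove necessity by a direct computation and sufficiency by a change of coordinates plus a Schur-complement argument. For \emph{necessity}, suppose $\Lambda$ exists with $\Psi+\Gamma^\top\Lambda\Omega+\Omega^\top\Lambda^\top\Gamma\succ 0$. Multiplying by $\Gamma_\perp^\top$ on the left and $\Gamma_\perp$ on the right, both cross terms vanish because $\Gamma\Gamma_\perp=0$. This leaves $\Gamma_\perp^\top\Psi\Gamma_\perp\succ 0$, since $\Gamma_\perp$ has full column rank. The same argument with $\Omega_\perp$ gives the second condition. (If a kernel is trivial, the corresponding condition is vacuous.)

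For \emph{sufficiency}, I would first reduce to $\Gamma$ and $\Omega$ of full row rank. Replacing $\Gamma$ by $R\Gamma$ with $R$ onto the row space changes neither $\ker\Gamma$ nor the set of achievable terms $\Gamma^\top\Lambda\Omega$, and similarly for $\Omega$. Next, I would choose a nonsingular $T$ adapted to the subspaces $\ker\Gamma\cap\ker\Omega$, $\ker\Gamma$ and $\ker\Omega$. This gives coordinates in which
\begin{equation*}
\Gamma T=\begin{bmatrix}0 & I & 0 & \cdots\end{bmatrix},\qquad \Omega T=\begin{bmatrix}0 & \ast & I & \cdots\end{bmatrix},
\end{equation*}
possibly with an extra block where the complements of the two kernels overlap. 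After the congruence $T^\top(\cdot)T$, the unknown $\Lambda$ enters only the blocks coupling the ``$\Gamma$-block'' to the ``$\Omega$-block''. The hypotheses then say that $T^\top\Psi T$ is positive definite on the coordinate subspace spanned by the $\ker\Gamma$ blocks, and on the one spanned by the $\ker\Omega$ blocks.

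The key step is to take the Schur complement with respect to the common block associated with $\ker\Gamma\cap\ker\Omega$, which is positive definite by either hypothesis. In the reduced matrix, $\Lambda$ appears \emph{affinely} and \emph{only} in one off-diagonal block. The diagonal blocks corresponding to $\ker\Gamma$ and to $\ker\Omega$ are positive definite by the two hypotheses, through a further Schur complement. Writing the reduced matrix as
\begin{equation*}
\begin{bmatrix}\Psi_{\Gamma} & M+\Lambda^\top\\ M^\top+\Lambda & \Psi_{\Omega}\end{bmatrix},
\end{equation*}
I would then choose $\Lambda:=-M^\top$, which makes the matrix block-diagonal with positive-definite blocks. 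Undoing the Schur complement and the congruence then yields the desired $\Lambda$.

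The main obstacle is the bookkeeping in the coordinate change when $\ker\Gamma$ and $\ker\Omega$ are in general position. In that case there is a fourth block, complementary to both kernels, in which $\Lambda$ appears in a diagonal entry through $\operatorname{He}(\Lambda_{\text{block}})$, not only off the diagonal. My first attempt would be to handle that block by taking this part of $\Lambda$ equal to $\sigma I$ with $\sigma>0$ large. That makes the diagonal block dominate, and then a Schur complement argument (in the spirit of Finsler's lemma) closes the proof. If this becomes unwieldy, I would fall back on the explicit parametrization of all solutions $\Lambda$ given in~\cite{Gahinet1994}.
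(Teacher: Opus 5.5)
The paper does not prove this lemma; it quotes it from \cite{Gahinet1994}, so there is no in-paper argument to compare with. Your proof is correct and self-contained. Necessity is right as written. The sufficiency strategy (full-row-rank reduction, kernel-adapted coordinates, Schur complement on the common-kernel block, then block completion) is the classical one.

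Two pieces of bookkeeping should be tightened, and doing so removes the obstacle you anticipate.

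First, the displayed target $\Omega T=[0\ \ast\ I\ \cdots]$ is not quite the form you want. If $\ast\neq 0$, then $\ker\Omega$ is not a coordinate subspace, and $\Lambda$ leaks into a diagonal block via $\operatorname{He}(\Lambda\ast)$. Instead, split $\mathbb{R}^n=K_0\oplus K_1\oplus K_2\oplus K_3$ with $\ker\Gamma=K_0\oplus K_1$, $\ker\Omega=K_0\oplus K_2$, and $K_3$ a complement of $\ker\Gamma+\ker\Omega$; the sum $K_0+K_1+K_2$ is automatically direct. Then combine $T$ with invertible row operations on $\Gamma$ and $\Omega$, which only reparametrize $\Lambda$, to reach
\begin{equation*}
\Gamma T=\begin{bmatrix}0&0&I&0\\0&0&0&I\end{bmatrix},\qquad \Omega T=\begin{bmatrix}0&I&0&0\\0&0&0&I\end{bmatrix}.
\end{equation*}

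Second, write $\Lambda=\begin{bmatrix}\Lambda_{21}&\Lambda_{23}\\ \Lambda_{31}&\Lambda_{33}\end{bmatrix}$ conformably. Then $\Lambda_{ij}$ lands exactly at block position $(i,j)$ of $T^\top\Gamma^\top\Lambda\Omega T$. Let $\hat\Psi_{ij}$ denote the blocks of the Schur complement of $T^\top\Psi T$ with respect to its $K_0$ block. After that Schur complement, the reduced matrix is
\begin{equation*}
\begin{bmatrix}\hat\Psi_{11}&\hat\Psi_{12}+\Lambda_{21}^\top&\hat\Psi_{13}+\Lambda_{31}^\top\\ \star&\hat\Psi_{22}&\hat\Psi_{23}+\Lambda_{23}\\ \star&\star&\hat\Psi_{33}+\operatorname{He}(\Lambda_{33})\end{bmatrix}.
\end{equation*}
Here $\hat\Psi_{11}\succ 0$ and $\hat\Psi_{22}\succ 0$ by the two hypotheses.

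All three couplings, including those to the ``fourth'' block $K_3$, are controlled by independent blocks of $\Lambda$. Choosing
\begin{equation*}
\Lambda_{21}=-\hat\Psi_{12}^\top,\quad \Lambda_{31}=-\hat\Psi_{13}^\top,\quad \Lambda_{23}=-\hat\Psi_{23},\quad \Lambda_{33}=\tfrac12(I-\hat\Psi_{33})
\end{equation*}
gives $\operatorname{diag}\{\hat\Psi_{11},\hat\Psi_{22},I\}\succ 0$ directly. Your $\sigma\to\infty$ argument also works, but it is not needed, and neither is the fallback to a parametrization of all solutions.
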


\setcounter{thm}{\thelemma}\stepcounter{lemma}
\begin{lem}[{\cite{Ishihara2017}}]\label{lem:finsler}
	Let $\Psi\colon\mathbb{P}\rightarrow\mathbb{S}^{n}$ and $\Gamma\colon\mathbb{P}\rightarrow\mathbb{R}^{m\times n}$, with $\mathbb{P}\subset\mathbb{R}^{p}$ being compact, be continuous. It holds that $\Gamma_{\perp}^\top(\pi)\Psi(\pi)\Gamma_{\perp}(\pi)\succ 0$ for all $\pi\in\mathbb{P}$, if and only if $\Psi(\pi) +\mu \Gamma^\top(\pi)\Gamma(\pi)\succ 0$ for all $\pi\in\mathbb{P}$, for some $\mu\in\mathbb{R}$.
\end{lem}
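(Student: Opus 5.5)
The plan is to prove the two directions separately. The ``if'' direction is pointwise and elementary. For the ``only if'' direction, I would first obtain a multiplier $\mu$ at each fixed $\pi$ (the classical Finsler lemma) and then make it uniform over $\mathbb{P}$ using continuity and compactness. Throughout I read $\Gamma_\perp(\pi)$ as any matrix whose columns form a basis of $\ker\Gamma(\pi)$. Positivity of $\Gamma_\perp^\top\Psi\Gamma_\perp$ does not depend on which basis is chosen. It is therefore equivalent to $x^\top\Psi(\pi)x>0$ for all nonzero $x\in\ker\Gamma(\pi)$, and I will work with this basis-free form. In particular, no continuity of $\Gamma_\perp$ itself is needed.

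\emph{Sufficiency.} Suppose $\Psi(\pi)+\mu\Gamma^\top(\pi)\Gamma(\pi)\succ 0$ and take $x=\Gamma_\perp(\pi)z$ with $z\neq 0$. Then $x\neq 0$ and $\Gamma(\pi)x=0$. Hence $z^\top\Gamma_\perp^\top\Psi\Gamma_\perp z=x^\top(\Psi+\mu\Gamma^\top\Gamma)x>0$ at every $\pi$. If $\ker\Gamma(\pi)=\{0\}$, the condition is vacuous.

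\emph{Necessity, pointwise step.} Fix $\pi$ and suppose no $\mu$ works. Then for each $n\in\mathbb{N}$ there is a unit vector $x_n$ with $x_n^\top\Psi(\pi)x_n+n\|\Gamma(\pi)x_n\|^2\leqslant 0$. This gives $x_n^\top\Psi x_n\leqslant 0$ and $\|\Gamma x_n\|^2\leqslant \|\Psi\|/n\to 0$. Passing to a convergent subsequence $x_n\to x^\ast$, we get $\|x^\ast\|=1$, $\Gamma(\pi)x^\ast=0$ and $x^{\ast\top}\Psi(\pi)x^\ast\leqslant 0$. This contradicts the hypothesis, since $x^\ast$ is a nonzero element of $\ker\Gamma(\pi)$. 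So for each $\pi$ there is some $\mu(\pi)$. Moreover, since $\Gamma^\top\Gamma\succcurlyeq 0$, every $\mu'\geqslant\mu(\pi)$ also works at $\pi$: the admissible set of multipliers is upward closed.

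\emph{Necessity, uniformity.} This is the main obstacle, and compactness handles it. Fix $\pi_0\in\mathbb{P}$ and set $\mu_0\coloneqq\mu(\pi_0)$. The map $\pi\mapsto\Psi(\pi)+\mu_0\Gamma^\top(\pi)\Gamma(\pi)$ is continuous, and its smallest eigenvalue is continuous and positive at $\pi_0$. Hence there is an open neighbourhood $U_{\pi_0}$ of $\pi_0$ on which the matrix stays positive definite with the same $\mu_0$. The sets $U_\pi$, $\pi\in\mathbb{P}$, cover the compact set $\mathbb{P}$, so finitely many $U_{\pi_1},\hdots,U_{\pi_r}$ suffice. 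Set $\mu\coloneqq\max_{l}\mu(\pi_l)$. Any $\pi\in\mathbb{P}$ lies in some $U_{\pi_l}$, and by upward closedness $\Psi(\pi)+\mu\Gamma^\top(\pi)\Gamma(\pi)\succcurlyeq\Psi(\pi)+\mu(\pi_l)\Gamma^\top(\pi)\Gamma(\pi)\succ 0$. This gives a single $\mu\in\mathbb{R}$ valid on all of $\mathbb{P}$, and $\mu$ can even be taken nonnegative.
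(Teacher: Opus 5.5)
Your proof is correct and complete. The paper does not prove this lemma; it lists it among ``existing technical lemmas'' and imports it from the literature. Your argument is therefore a self-contained justification rather than an alternative to one in the text.

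The three ingredients are all sound:
\begin{enumerate*}[label=(\roman*)]
\item pointwise Finsler via sequential compactness of the unit sphere,
\item upward closedness in $\mu$ because $\Gamma^\top\Gamma\succcurlyeq 0$,
\item continuity of $\lambda_{\min}$ together with a finite subcover.
\end{enumerate*}
Your basis-free reading of $\Gamma_\perp(\pi)$ is the right one. The rank of $\Gamma(\pi)$ may jump, so $\Gamma_\perp$ need not be continuous, and the lemma as stated does not need it.

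If you want a shorter necessity proof, the two steps can be merged. Suppose no constant $\mu$ works. Then for each $k\geqslant 1$ there are $\pi_k\in\mathbb{P}$ and a unit $x_k$ with $x_k^\top(\Psi(\pi_k)+k\Gamma^\top(\pi_k)\Gamma(\pi_k))x_k\leqslant 0$. With $M\coloneqq\max_{\pi\in\mathbb{P}}\|\Psi(\pi)\|$ this gives $\|\Gamma(\pi_k)x_k\|^2\leqslant M/k$. A convergent subsequence $(\pi_k,x_k)\to(\pi^\ast,x^\ast)$ then yields, by continuity, a unit $x^\ast\in\ker\Gamma(\pi^\ast)$ with $x^{\ast\top}\Psi(\pi^\ast)x^\ast\leqslant 0$, a contradiction. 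This version needs neither the neighbourhood argument nor upward closedness. Your two-step version has the advantage of isolating exactly where compactness of $\mathbb{P}$ enters.

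A minor nit: your sequence index $n$ clashes with the matrix dimension $n$, and it must start at $1$ for the bound $\|\Psi\|/n$ to make sense.
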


\setcounter{thm}{\thelemma}\stepcounter{lemma}
\begin{lem}[{\cite{Zhou1988}}]\label{lem:youngs}
	Let $X,Y\in\mathbb{R}^{m\times n}$ and $S\in\mathbb{S}^{m}_{\succ 0}$. It holds that $X^\top S^{-1}X + Y^\top SY\succcurlyeq X^\top Y+Y^\top X$.
\end{lem}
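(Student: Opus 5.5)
The plan is to obtain the inequality by completing a square, so that it reduces to positive semidefiniteness of a Gram matrix. Since $S\in\mathbb{S}^{m}_{\succ 0}$, it has a unique symmetric positive-definite square root $S^{1/2}$, whose inverse $S^{-1/2}$ is also symmetric. I will introduce the auxiliary matrix
\begin{equation*}
    Z \coloneqq S^{-1/2}X - S^{1/2}Y\in\mathbb{R}^{m\times n}
\end{equation*}
and use the elementary fact that $Z^\top Z\succcurlyeq 0$, because $v^\top Z^\top Z v=\|Zv\|^2\geqslant 0$ for every $v\in\mathbb{R}^n$.

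The key step is to expand $Z^\top Z$ using $S^{-1/2}S^{-1/2}=S^{-1}$, $S^{1/2}S^{1/2}=S$ and $S^{-1/2}S^{1/2}=I$. This gives
\begin{equation*}
    Z^\top Z = X^\top S^{-1}X - X^\top Y - Y^\top X + Y^\top S Y.
\end{equation*}
Combining this identity with $Z^\top Z\succcurlyeq 0$ and rearranging yields exactly $X^\top S^{-1}X + Y^\top SY\succcurlyeq X^\top Y+Y^\top X$.

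There is no real obstacle here; the only point requiring care is the existence and symmetry of $S^{\pm 1/2}$, which follows from the spectral decomposition of $S\succ 0$. A square-root-free alternative is to note that
\begin{equation*}
    \begin{bmatrix}\begin{smallmatrix} S^{-1} & -I\\ -I & S\end{smallmatrix}\end{bmatrix}\succcurlyeq 0.
\end{equation*}
This holds by a Schur complement, since $S-S=0$. A congruence with $\operatorname{diag}\{X,Y\}$, applied as $\begin{bmatrix}\begin{smallmatrix} X^\top & Y^\top\end{smallmatrix}\end{bmatrix}(\cdot)\begin{bmatrix}\begin{smallmatrix} X\\ Y\end{smallmatrix}\end{bmatrix}$, then gives the same conclusion.
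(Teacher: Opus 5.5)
Your proof is correct, and the expansion $Z^\top Z = X^\top S^{-1}X - X^\top Y - Y^\top X + Y^\top SY$ is right. The paper gives no proof of its own here (it imports the lemma from the literature), and your completion-of-squares argument is the standard proof of this inequality.

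In your alternative, the operation is really $T^\top(\cdot)T$ with $T=[\,X^\top\;\;Y^\top\,]^\top$, not a congruence with $\operatorname{diag}\{X,Y\}$. This is harmless, since $T^\top M T\succcurlyeq 0$ for any $T$ whenever $M\succcurlyeq 0$. An even shorter square-root-free version is simply $(X-SY)^\top S^{-1}(X-SY)\succcurlyeq 0$.
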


\end{document}